\documentclass[12pt,reqno]{article}
\usepackage{amsmath,amssymb,amsfonts}
\usepackage[usenames]{color}

\usepackage[colorlinks=true,
linkcolor=webgreen,
filecolor=webbrown,
citecolor=webgreen]{hyperref}

\definecolor{webgreen}{rgb}{0,.5,0}
\definecolor{webbrown}{rgb}{.6,0,0}

\usepackage{color}
\usepackage{fullpage}
\usepackage{float}

\usepackage{graphics,amsmath,amssymb}
\usepackage{amsthm}
\usepackage{amsfonts}
\usepackage{latexsym}
\usepackage{epsf}

\setlength{\textwidth}{6.5in}
\setlength{\oddsidemargin}{.1in}
\setlength{\evensidemargin}{.1in}
\setlength{\topmargin}{-.1in}
\setlength{\textheight}{8.4in}

\theoremstyle{plain}
\newtheorem{theorem}[equation]{Theorem}
\newtheorem{cor}[equation]{Corollary}

\newtheorem{lemma}[equation]{Lemma}

\theoremstyle{definition}
\newtheorem{definition}[equation]{Definition}

\numberwithin{equation}{section}

\input xy
\xyoption{all}

\newcommand{\seqnum}[1]{\href{http://oeis.org/#1}{\underline{#1}}}

\begin{document}

\begin{center}
\epsfxsize=4in

\end{center}

\begin{center}
\vskip 1cm{\LARGE\bf Action Graphs and Catalan Numbers}
\vskip 1cm
\large
Gerardo Alvarez, Julia E. Bergner, and Ruben Lopez \\
Department of Mathematics \\
University of California, Riverside \\
Riverside, CA 92521 \\
\href{mailto:galva012@ucr.edu}{\tt galva012@ucr.edu} \\
\href{mailto:bergnerj@member.ams.org}{\tt bergnerj@member.ams.org} \\
\href{mailto:rlope015@ucr.edu}{\tt rlope015@ucr.edu}
\end{center}

\vskip .2 in

\begin{abstract}
We introduce an inductively defined sequence of directed graphs and prove that the number of edges added at step $k$ is equal to the $k$th Catalan number.  Furthermore, we establish a bijection between the set of edges adjoined at step $k$ and the set of planar rooted trees with $k$ edges.
\end{abstract}

\section{Introduction}

In a recent paper, the second-named author and Hackney introduced certain inductively defined directed graphs with the goal of understanding the structure of a rooted category action on another category \cite{reedy}.  While these graphs were developed in such a way that they encoded the desired structure, the question remained what kinds of patterns could be found in these inductively defined sequences of graphs.  In this paper, we look at the most basic of these graph sequences.  We begin with the trivial graph with one vertex and no edges, and we inductively add new vertices and edges depending on the number of paths in the previous graph.  We prove that at the $k$th step, the number of vertices and edges added is given by the $k$th Catalan number.

The Catalan numbers give a well-known sequence of natural numbers, arising in many contexts in combinatorics \seqnum{A000108} \cite{oeis}.  A long list of ways to obtain the Catalan numbers is found in Stanley's book \cite{stanley} and subsequent online addendum \cite{catadd}.  Here we use that the 0th Catalan number is $C_0=1$, and , for any $k \geq 1$, the $(k+1)$st Catalan number $C_{k+1}$ is given by the formula
\[ C_{k+1} = \sum_{i=0}^k C_i C_{k-i}. \]

However, it is often of interest to determine a direct comparison with one of the other ways of obtaining the Catalan numbers. To this end, we establish a direct bijection between the set of edges added at step $k$ in the action graph construction and the set of planar rooted trees with $k$ edges, also known to have $C_k$ elements.
 
\section{Action graphs}

We begin by recalling a few basic definitions.

\begin{definition}
A \emph{directed graph} is a pair $G=(V,E)$ where $V$ is a set whose elements are called \emph{vertices} and $E$ is a set of ordered pairs of vertices, called \emph{edges}.  Given an edge $e= (v, w)$, we call $v$ the \emph{source} of $e$, denoted by $v=s(e)$, and call $w$ the \emph{target} of $e$, denoted by $w=t(e)$.
\end{definition}

For the directed graphs that we consider here, we assume that, for every $v \in V$, we have $(v,v) \in E$.  While we could think of these edges as loops at each vertex, we prefer to regard them as ``trivial" edges given by the vertices.  Otherwise, we have no loops or multiple edges in the graphs we consider, so there is no ambiguity in the definition as we have given it.

\begin{definition}
A \emph{(directed) path} in a directed graph is a sequence of edges $e_1, \ldots, e_n$ such that for each $1\leq i<k$, $t(e_i)=s(e_{i+1})$.  For paths consisting of more than one edge, we require all these edges to be nontrivial.  We call $s(e_1)$ the \emph{initial vertex} of the path and $t(e_k)$ the \emph{terminal vertex} of the path.
\end{definition}

The directed graphs we consider here are \emph{labelled} by the natural numbers; in other words, they are equipped with a given function $V \rightarrow \mathbb N$.

\begin{definition}
For each natural number $k$, the \emph{action graph} $A_k$ is the labeled directed graph defined inductively as follows.  The action graph $A_0$ is defined to be the graph with one vertex labeled by 0 and no nontrivial edges.  Inductively, given the $k$th action graph $A_k$, define the $(k+1)$st action graph $A_{k+1}$ by freely adjoining new edges by the following rule.  For any vertex $v$ labeled by $k$, consider all paths in $A_k$ with terminal vertex $v$.  For each such path, adjoin a new edge whose source is the initial vertex $u$ of the path, and whose target is a new vertex which is labeled by $k+1$.
\end{definition}

Thus, the first few action graphs can be depicted as follows:
\[ \xymatrix{A_0: & \bullet_0 && A_1: & \bullet_0 \ar[r] & \bullet_1  \\
A_2: &&& A_3: & \bullet_3 & \bullet_3 \\
& \bullet_2 & \bullet_2 && \bullet_2 \ar[u] & \bullet_2 \ar[u] \\
& \bullet_0 \ar[u] \ar[r] & \bullet_1 \ar[u] & \bullet_3 & \bullet_0 \ar[u] \ar[d] \ar[l] \ar[r] & \bullet_1 \ar[u] \ar[d] \\
&&&& \bullet_3 & \bullet_3} \]

The main result which we wish to prove is the following, which indicates how many new vertices (and edges) are added to $A_k$ to obtain $A_{k+1}$.

\begin{theorem} \label{main}
When building $A_{k+1}$ from $A_k$, the number of vertices added and labeled $k+1$ (and likewise the number of edges added) is given by the $(k+1)$st Catalan number, $C_{k+1}$.
\end{theorem}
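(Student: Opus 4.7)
The plan is to exhibit a recursive tree structure on each $A_k$ that directly realizes the Catalan convolution. First I would observe that, by construction, every vertex $v \neq v_0$ is adjoined at a single step together with exactly one incoming nontrivial edge, and no later step introduces any edge with target $v$ (since all new edges target newly created vertices). Thus each non-root vertex has a unique incoming nontrivial edge, and a routine induction shows every vertex is reachable from $v_0$ along directed edges. Consequently $A_k$ is a directed tree rooted at $v_0$ with all edges oriented away from the root, and there is at most one directed path between any two vertices.

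The main structural claim is a sub-tree lemma: for every vertex $u$ of $A_k$ with label $j$, the sub-tree of $A_k$ rooted at $u$, after shifting labels down by $j$, is isomorphic as a labeled directed graph to $A_{k-j}$. I would prove this by induction on $k$, handling the base case $k = 0$ and the case $j = k$ (where $u$ is newly adjoined) trivially, since in both situations the sub-tree is a single vertex isomorphic to $A_0$. For $j < k$, the sub-tree of $u$ in $A_k$ is obtained from the sub-tree of $u$ in $A_{k-1}$ (isomorphic to $A_{k-j-1}$ by induction) by adjoining the new label-$k$ descendants of $u$. Because edges run from parents to children, any path in $A_{k-1}$ starting at a descendant of $u$ stays entirely within the sub-tree of $u$; hence paths in $A_{k-1}$ ending at a label-$(k-1)$ descendant of $u$ correspond, via the inductive isomorphism, to paths in $A_{k-j-1}$ ending at label-$(k-j-1)$. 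These are exactly the data indexing the new label-$(k-j)$ vertices of $A_{k-j}$, and the inductive isomorphism preserves the source of each newly adjoined edge.

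Given the sub-tree lemma, the theorem follows by a double count. Write $V_j$ for the (stabilized) number of label-$j$ vertices, so $V_0 = 1$ and $V_{k+1}$ equals the number of new vertices (equivalently, edges) adjoined at step $k+1$. Since $A_k$ is a tree, a path ending at a label-$k$ vertex $v$ is uniquely determined by its initial vertex $u$, which must be an ancestor of $v$ (possibly $v$ itself). By the sub-tree lemma, a vertex $u$ of label $j$ has exactly $V_{k-j}$ label-$k$ descendants, so summing over $u$ grouped by label gives
\[ V_{k+1} \;=\; \sum_{j=0}^{k} V_j\, V_{k-j}. \]
This is the Catalan convolution, and since $V_0 = 1 = C_0$ induction yields $V_k = C_k$ for all $k \geq 0$, proving Theorem \ref{main}. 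The main obstacle is the bookkeeping in the sub-tree lemma: one must verify not only that the vertex sets on the two sides match, but that the inductive isomorphism identifies the source of the new incoming edge at each label-$k$ descendant of $u$ with the source of the corresponding new edge in $A_{k-j}$.
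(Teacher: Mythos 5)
Your proposal is correct and takes essentially the same route as the paper: both arguments count the paths in $A_k$ ending at vertices labeled $k$ by grouping them according to the label $j$ of their initial vertex, obtaining $C_jC_{k-j}$ for each $j$ and hence the Catalan convolution. Your sub-tree lemma is a precise, all-$j$ version of the paper's Lemma~\ref{paths} (which is stated only for the vertex labeled $1$ and then iterated informally), so your write-up is, if anything, the more carefully justified of the two.
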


We begin with a lemma about paths in action graphs.

\begin{lemma} \label{paths}
The number of paths from 1 to $k$ in $A_k$ is equal to the number of paths from 0 to $k-1$ in $A_{k-1}$.
\end{lemma}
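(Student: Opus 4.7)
My plan is to exploit the rigidity of the action graph construction. The key structural observation is that every vertex $v \in A_k$ with $\ell(v) \geq 1$ has exactly one non-trivial incoming edge, namely the edge adjoined to create $v$, and that every non-trivial edge strictly increases labels. An induction on $\ell(v)$ then shows that between any fixed source $u$ and any target $v$ there is at most one directed path in $A_k$; in particular, every vertex of $A_k$ is reached from $0$ by a unique path, while every vertex of $A_k$ reachable from $1$ is reached from $1$ by a unique path. Writing $A_k^1$ for the full subgraph of $A_k$ spanned by the vertices reachable from $1$, this reduces the lemma to the equality
\[ \#\{v \in A_k^1 : \ell(v) = k\} = \#\{w \in A_{k-1} : \ell(w) = k-1\}. \]

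To prove this, I would build inductively a labeled directed-graph isomorphism $\Phi_{k-1} : A_{k-1} \xrightarrow{\sim} A_k^1$ that shifts every label up by one and sends $0 \in A_{k-1}$ to $1 \in A_k$; the required equality is then immediate by restricting $\Phi_{k-1}$ to vertices of labels $k-1$ and $k$. The base case $\Phi_0$ is clear. For the inductive step, each new vertex of $A_{k-1}$ (labeled $k-1$) is indexed by a path in $A_{k-2}$ ending at a vertex labeled $k-2$. Applying $\Phi_{k-2}$ transports this to a path in $A_{k-1}^1 \subset A_{k-1}$ ending at a vertex labeled $k-1$, which in turn indexes a new vertex of $A_k$ (labeled $k$) whose incoming edge has source $\Phi_{k-2}$ of the original initial vertex, hence lies in $A_k^1$. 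Setting $\Phi_{k-1}$ to send the first new vertex to the second extends the isomorphism and preserves edges, since the source of each new incoming edge is exactly the image of the corresponding previous source under $\Phi_{k-2}$.

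The main obstacle is verifying that this inductive extension really \emph{surjects} onto the new vertices of $A_k$ lying in $A_k^1$. This follows from the closure property that $A_{k-1}^1$ contains the targets of all edges emanating from its vertices, so any directed path in $A_{k-1}$ whose initial vertex is reachable from $1$ is itself entirely contained in $A_{k-1}^1$. Such paths are exactly the images under $\Phi_{k-2}$ of paths in $A_{k-2}$ ending at a vertex labeled $k-2$, which is what is required to close the induction.
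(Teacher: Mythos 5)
Your proposal is correct and rests on the same idea as the paper's own proof: the subgraph of $A_k$ reachable from the vertex labeled $1$ is a copy of $A_{k-1}$ with all labels shifted up by one, since no path out of $1$ can pass back through $0$, so edges are adjoined out of $1$ exactly as they are out of $0$ but one step later. The paper asserts this in a single sentence, whereas you reduce path-counting to vertex-counting via uniqueness of paths and then construct the label-shifting isomorphism $\Phi_{k-1}\colon A_{k-1}\to A_k^1$ explicitly, checking surjectivity; this is a careful formalization of the paper's argument rather than a genuinely different route.
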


\begin{proof}
In any $A_k$ with $k \geq 1$, there is only one edge connecting the single vertex labeled by 0 and the single vertex labeled by 1.  Since the vertex labeled by 0 is the source and the vertex labeled by 1 is the target, no paths whose source is the vertex labeled by 1 pass through the vertex labeled by 0.  Therefore, edges with source at the vertex labeled by 1 are adjoined exactly in the same way as edges with source 0, but one step later.
\end{proof}

\begin{proof}[Proof of Theorem \ref{main}]
We use induction and Lemma \ref{paths}.  For the base case we know that $A_0$ has one vertex labeled by 0 and $C_0=1$.

For the inductive step, suppose we know $A_k$ has $C_k$ vertices labeled by $k$.  There is always a unique path from 0 to any vertex, so there are $C_k$ paths from the vertex labeled by 0 to a vertex labeled by $k$.  Hence, by considering paths with source the 0 vertex, we create $C_k = C_0 C_k$ new vertices in $A_{k+1}$.  Lemma \ref{paths} tells us that there are $C_{k-1}$ paths from the single vertex labeled by 1 to vertices labeled by $k$.  Thus, we must add $C_{k-1}=C_1 C_{k-1}$ new vertices in $A_{k+1}$.  Applying Lemma \ref{paths} twice, we similarly see that there are $C_{k-2}$ paths from any vertex labeled by 2 (of which there are $C_2$) to any vertex labeled $C_k$, for a total of $C_2C_{k-1}$ new vertices added.  We continue this process, concluding by using the inductive hypothesis that there are $C_k$ vertices labeled by $k$, from each of which there is only one (trivial) path, from which we produce $C_k = C_kC_0$ new vertices.  Taking the sum, we obtain that the total number of paths in $A_k$ ending at vertices labeled by $k$ is
\[ C_0 C_k + C_1 C_{k-1} + \cdots + C_kC_0 = C_{k+1}. \]
Since a new vertex is added for each such path, and a new edge for each such vertex, we have completed the proof.
\end{proof}

The main theorem has the following immediate consequence.

\begin{cor}
The number of vertices in the $k$th action graph $A_k$ is given by the $k$th term in the sequence of partial sums of Catalan numbers \seqnum{A014137} \cite{oeis}.
\end{cor}

\section{A comparison with planar rooted trees}

In this section, we give an explicit one-to-one correspondence between the set of leaves of $A_{k+1}$ and the set of planar rooted trees with $k+1$ edges.  The latter set has $C_{k+1}$ elements \cite[8.4]{koshy}.

We begin with the necessary definitions.  Here, we work with graphs which are no longer assumed to be directed.  A graph is \emph{connected} if there exists a path between any two vertices.

\begin{definition}
A \emph{tree} is a connected graph with no loops or multiple edges.  A \emph{rooted tree} is a tree with a specified vertex called the \emph{root}.
\end{definition}

\begin{definition}
A \emph{leaf} of a rooted tree is a vertex of valence 1 which is not the root.  (In the case of a single vertex with no edges, we take this root vertex also to be a leaf.)  A \emph{branch} of a rooted tree is a path from either the root or from a vertex of valence greater than 2 to a leaf.
\end{definition}

Note in particular that the action graphs are (directed) trees, and that the vertices of $A_{k+1}$ which are not in $A_k$ are precisely the leaves of $A_{k+1}$.

Here we want to consider \emph{planar} rooted trees.  Thus, if we view the bottom vertex as the root, we regard the following two trees as different:
\[ \xymatrix{\bullet &&&&& \bullet \\
\bullet \ar@{-}[u] && \bullet & \bullet && \bullet \ar@{-}[u] \\
& \bullet \ar@{-}[ul] \ar@{-}[ur] &&& \bullet \ar@{-}[ul] \ar@{-}[ur] & } \]
To aid in our comparison with action graphs, we define a means of labeling the vertices of a planar rooted tree.  The root vertex is always given the label 0.  Given a representative of the tree with the root at the bottom, label the vertices by successive natural numbers, moving upward from the root and from left to right.  For example, we have the labelings
\[ \xymatrix{\bullet_2 &&&&& \bullet_3 \\
\bullet_1 \ar@{-}[u] && \bullet_3 & \bullet_1 && \bullet_2 \ar@{-}[u] \\
& \bullet_0 \ar@{-}[ul] \ar@{-}[ur] &&& \bullet_0. \ar@{-}[ul] \ar@{-}[ur] & } \]

Our goal is to prove that we can use these labels to assemble planar rooted trees together to form the action graph in such a way that each planar rooted tree corresponds to exactly one vertex of highest labeling in the action graph.

Observe that the set of all directed trees with $k$ edges can be partially ordered by the length of the unique path from the vertex labeled by 0 to the vertex labeled by $k$.  Beginning with the tree with longest such path length, namely, the tree with a branch of length $k$, for each directed tree, identify the edge from 0 to 1, and any branches that do not end in the vertex $k$, with vertices and edges already present in the graph.  Adjoin new vertices and edges to the graph corresponding to the branch containing the vertex $k$.  Using the partial ordering on the set of trees guarantees that, as we assemble the trees together, each tree that is added contributes only (part of) a branch.  Hence, any possible ambiguity about placement is eliminated.  Repeating for all planar rooted trees with $k$ edges, we claim that the resulting directed graph is precisely the action graph $A_k$.

This assembly can be depicted for the case when $k=2$ as follows.  We two planar rooted trees with two edges are given by
\[ \xymatrix{\bullet_2  &&& \\
\bullet_1 \ar@{-->}[u] & \bullet_1 && \bullet_2 \\
\bullet_0 \ar@{-->}[u] && \bullet_0 \ar@{..>}[ul] \ar@{..>}[ur] & } \]
and can thus be assembled to form the action graph $A_2$, as given by
\[ \xymatrix{\bullet_2 & \bullet_2 \\
\bullet_0 \ar@{..>}[u] \ar@{-->}[r]<.5ex> \ar@{..>}[r]<-.5ex> & \bullet_1. \ar@{-->}[u]} \]

Observe in this example that the overlap of the two trees in $A_2$ corresponds to the action graph $A_1$, and that the two new edges in $A_2$ correspond exactly to the two ways to attach an edge to the unique planar rooted tree with one edge to obtain a planar rooted tree with two edges.

Generalizing this example, we obtain the following explicit correspondence.

\begin{theorem}
The function assigning any planar rooted tree with $k$ edges to the leaf that it contributes to the action graph $A_k$ defines a bijection.
\end{theorem}

\begin{proof}
We prove this theorem inductively.  When $k=0$, both $A_0$ and the only planar rooted tree with no edges consist of a single vertex and no edges.

Thus, suppose that we have proved the result for $k \geq 0$.  Consider the set of planar rooted trees with $k+1$ edges.  By our inductive hypothesis, the trees with $k$ edges assemble to produce the action graph $A_k$.  But these trees can be obtained from those with $k+1$ vertices by removing the leaf labeled by $k+1$.  Indeed, the (possibly multiple) ways of obtaining a rooted tree with $k+1$ edges from one with $k$ edges correspond to the ways we adjoin new leaves to produce $A_{k+1}$ from $A_k$.  In particular, there is precisely one edge ending in a vertex labeled by $k+1$ coming from each planar rooted tree.

Conversely, regard $A_{k+1}$ as a labeled rooted tree.  We claim that the subtrees of $A_{k+1}$ containing exactly one vertex labeled by $i$ for each $0 \leq i \leq k+1$ correspond exactly to the planar rooted trees with $k+1$ edges.  Again, assume this fact is true for $k \geq 0$.  Given every such subtree with $k$ edges, and vertices labeled from 0 to $k$, the choices for adjoining an extra edge containing a vertex labeled by $k+1$ correspond exactly to the vertices on the path from the vertex labeled by 0 to the vertex labeled by $k$.  But, these choices coincide with the ways to obtain a planar rooted tree with $k+1$ edges from a given planar rooted tree with $k$ edges.
\end{proof}

Observe that the action graph $A_k$ can thus be regarded as a universal tree for all planar rooted trees with $k$ edges, in that it is built from these subtrees and that, using the labeling scheme for planar rooted trees, these subtrees can be recovered from $A_k$.

\section{Acknowledgments}

The first- and third-named authors were participants in the RISE program at UC Riverside in Summer 2013, with support from the HSI-STEM and CAMP programs, respectively.  The second-named author was partially supported by NSF grant DMS-1105766 and CAREER award DMS-1352298.  The authors thank the referee for suggestions which improved the exposition of the paper.

\bigskip
\hrule
\bigskip

\noindent 
2010 {\it Mathematics Subject Classification}: Primary 05A19, Secondary 05C05.

\noindent 
\emph{Keywords:} Catalan number, directed graph.

\bigskip
\hrule
\bigskip

\noindent (Concerned with sequences \seqnum{A000108} and \seqnum{A014137}.)

\bigskip
\hrule
\bigskip

\noindent 
Received ; revised version received ; Published in {\it Journal of Integer Sequences} .

\bigskip
\hrule
\bigskip

\noindent 
Return to 
\htmladdnormallink{Journal of Integer Sequences home page}{http://www.cs.waterloo.ca/journals/JIS/}

\bigskip
\hrule
\bigskip

\end{document}